\newtheorem{theorem}{Theorem}[section]
\newtheorem{lemma}[theorem]{Lemma}
\newtheorem{proposition}[theorem]{Proposition}
\theoremstyle{definition}
\newtheorem{remark*}[theorem]{Remark}
\newcommand\R{\mathbb{R}}
\newcommand\Z{\mathbb{Z}}
\newcommand\lra{\longrightarrow}
\begin{document} 

\title{{\huge Tetrahedra on deformed spheres}\\
{\huge and integral group cohomology}}
\author{Pavle V. M. Blagojevi\'c%
\thanks{Supported by the grant 144018 of the Serbian Ministry of Science and
Technological development} \\ 
Mathemati\v cki Institut\\
Knez Michailova 35/1\\
11001 Beograd, Serbia\\
\url{pavleb@mi.sanu.ac.yu} \and \setcounter{footnote}{6} G\"unter M. Ziegler%
\thanks{Partially supported by the German Research Foundation DFG} \\ 
Inst.\ Mathematics, MA 6-2\\
TU Berlin\\
D-10623 Berlin, Germany\\
\url{ziegler@math.tu-berlin.de}}
\date{{\small August 28, 2008}}
\maketitle

\begin{abstract}
\noindent We show that for every injective continuous map
$f:S^2\rightarrow\mathbb{R}^3$ 
there are four distinct points in the image of~$f$ such that
the convex hull is a tetrahedron with the property that two opposite edges
have the same length and the other four edges are also of equal length. This
result represents a partial result for the topological Borsuk problem for
$\mathbb{R}^3$. Our proof of the geometrical claim, via Fadell--Husseini
index theory, provides an instance where arguments based on group cohomology
with integer coefficients yield results that cannot be accessed using only
field coefficients.
\end{abstract}

\section{Introduction}

The motivation for the study of the existence of particular types of
tetrahedra on deformed $2$-spheres is twofold. The topological Borsuk
problem, as considered in \cite{S}, along with the square peg problem \cite{Shn}
inspire the search for possible polytopes with nice metric properties
whose vertices lie on the continuous images of spheres. Beyond their
intrinsic interest, these problems can be used as testing grounds for tools
from equivariant topology, e.g.\ for comparing the strength of
Fadell--Husseini index theory with ring resp.\ field coefficients.

\begin{figure}[tbh]
\centering\includegraphics[scale=0.55]{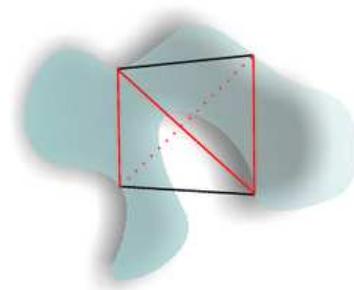}
	%\centering\includegraphics[scale=0.45]{s-0.eps}
\caption{$D_8$-invariant tetrahedra on deformed sphere $S^2$}
\end{figure}

The following theorem will be proved through the use of Fadell--Husseini
index theory with coefficients in the ring $\mathbb{Z}$. It is also going to
be demonstrated that Fadell--Husseini index theory with coefficients in
field $\mathbb{F}_2$ has no power in this instance (Section \ref{Sec:Final-1}).

\begin{theorem}
\label{Th:Main-1} Let $f:S^2\rightarrow\mathbb{R}^3$ an injective continuous
map. Then its image contains vertices of a tetrahedron that has the symmetry
group $D_8$ of a square. That is, there are four distinct points 
$\xi_1$, $\xi_2$, $\xi_3$ and $\xi_4$ on $S^2$ such that
\begin{equation*}
d(f(\xi_1),f(\xi_2)) = d(f(\xi_2),f(\xi_3)) = d(f(\xi_3),f(\xi_4)) =
d(f(\xi_4),f(\xi_1))
\end{equation*}
and
\begin{equation*}
d(f(\xi_1),f(\xi_3)) = d(f(\xi_2),f(\xi_4)).
\end{equation*}
\end{theorem}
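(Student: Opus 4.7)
This is a classical configuration-space/test-map problem, and the plan is to solve it by the Fadell--Husseini index method. The twist advertised in the abstract is that the decisive obstruction class only survives with integer coefficients, so much of the work will be bookkeeping of $2$-torsion in $H^*(BD_8;\Z)$.

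\medskip

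\noindent\textbf{Setup and test map.} I regard the sought quadruple as a cyclically ordered tuple $(\xi_1,\xi_2,\xi_3,\xi_4)$, on which $D_8$ acts as the symmetries of the square: the rotation generator $r$ as the $4$-cycle $(1\,2\,3\,4)$ and a reflection $s$ as the transposition $(2\,4)$. The six edges of $K_4$ then split into two $D_8$-orbits, the four \emph{cycle edges} $\{12,23,34,41\}$ and the two \emph{diagonal edges} $\{13,24\}$, and the squared-distance map
\[
(\xi_1,\ldots,\xi_4)\longmapsto\bigl(d(f(\xi_1),f(\xi_2))^2,\,d(f(\xi_2),f(\xi_3))^2,\,d(f(\xi_3),f(\xi_4))^2,\,d(f(\xi_4),f(\xi_1))^2,\,d(f(\xi_1),f(\xi_3))^2,\,d(f(\xi_2),f(\xi_4))^2\bigr)
\]
into $\R^4\oplus\R^2$ is $D_8$-equivariant for the coordinate-permutation action on the target. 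Let $W$ be the complement of the line of $D_8$-fixed points; it is a $4$-dimensional $D_8$-representation that splits as $W=U\oplus W_0$ with $\dim U=3$ (carrying the cycle-edge differences) and $\dim W_0=1$ (carrying the diagonal-edge difference). Let $X$ be the $D_8$-space of ordered $4$-tuples of pairwise distinct points on $S^2$, or a convenient equivariant deformation retract of it such as an appropriate deleted product. Projecting the squared-distance map onto $W$ gives a $D_8$-equivariant test map $\tau:X\to W$, whose zeroes correspond precisely to quadruples whose $f$-images form a $D_8$-symmetric tetrahedron of the desired type; injectivity of $f$ ensures that the four vertices are then geometrically distinct.

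\medskip

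\noindent\textbf{Reduction to an index comparison.} Assume, for contradiction, that no such tetrahedron exists. Then $\tau$ lands in $W\setminus\{0\}$, which $D_8$-equivariantly deformation retracts onto the unit sphere $S(W)$, yielding an equivariant map $X\to S(W)$. By the monotonicity of the Fadell--Husseini index this forces
\[
\mathrm{Ind}_{D_8}\bigl(S(W);\Z\bigr)\ \subseteq\ \mathrm{Ind}_{D_8}(X;\Z),
\]
so it is enough to show that this inclusion fails.

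\medskip

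\noindent\textbf{Computation and the main obstacle.} The index of $S(W)$ is governed by the integral Euler class of the Borel bundle $ED_8\times_{D_8}W\to BD_8$; I would compute it from the splitting $W=U\oplus W_0$ using multiplicativity of Euler classes together with the known ring structure of $H^*(BD_8;\Z)$. The index of $X$ I would attack via the product structure on $(S^2)^4$ and the Serre spectral sequence of the Borel construction $ED_8\times_{D_8}X\to BD_8$, or via a small explicit $D_8$-CW model. The heart of the difficulty is precisely the point flagged in the abstract: over $\F_2$ the mod-$2$ reduction of the relevant Euler class already lies in $\mathrm{Ind}_{D_8}(X;\F_2)$, so no contradiction can be extracted from mod-$2$ cohomology alone. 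The witnessing class must therefore live in the $2$-torsion of $H^*(BD_8;\Z)$---plausibly the integral Bockstein of an $\F_2$-class whose integral lift survives in $\mathrm{Ind}_{D_8}(S(W);\Z)$ but vanishes in $\mathrm{Ind}_{D_8}(X;\Z)$. Isolating this integral class and tracking the relevant differentials in the Borel spectral sequence accurately enough to rule out its lying in the image from $X$ is where I expect the real work of the proof to sit.
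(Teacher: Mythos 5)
Your overall strategy is indeed the one the paper uses (configuration space plus test map, then a Fadell--Husseini index comparison over $\Z$ in which the decisive class is $2$-torsion and dies under reduction mod $2$), but as written the proposal has two genuine gaps. The first is that the decisive computations are deferred: you state that the index of the sphere comes from an Euler class and that the index of the configuration space should come from a Borel spectral sequence, but the entire content of the proof is carrying these out and seeing that the comparison actually fails. In the paper this is done after restricting to the cyclic subgroup $\Z_4\le D_8$ (a simplification your plan does not make, and which keeps the integral algebra manageable, $H^*(\Z_4;\Z)=\Z[U]/4U$): one gets $\mathrm{Index}_{\Z_4,\Z}S(U_4\times U_2)=\langle 2U^2\rangle$ from the top Chern class of $V^1\oplus(V^1\otimes V^1)$, while the configuration-space side requires computing $H^*(\Omega;\Z)$ as a $\Z_4$-module by Poincar\'e--Lefschetz duality for the pair $((S^2)^4,Y)$ and then running the Serre spectral sequence with nonconstant coefficients, where freeness of the action forces $d_2(T^i\Upsilon)=T^{i+1}$ and $d_6(\Lambda)=U^3$, giving $\mathrm{Index}_{\Z_4,\Z}\Omega=\langle U^3\rangle\not\ni 2U^2$. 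None of this is routine bookkeeping that can be waved at; it is the theorem.

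The second and more serious gap is your choice of configuration space. The paper does \emph{not} use the deleted product of pairwise distinct $4$-tuples: it uses $\Omega=(S^2)^4\setminus Y$ with $Y=\{(x,y,x,y)\}$, removing only the single ``opposite pairs coincide'' diagonal. This choice is essential twice over. First, the pair $((S^2)^4,Y)$ is simple enough that $H^*(\Omega;\Z)\cong H_{8-*}((S^2)^4,Y;\Z)$ is explicitly computable as a $\Z_4$-module, whereas the full fat diagonal is far more complicated. Second, and decisively, the deleted product $F$ embeds $D_8$-equivariantly into $\Omega$, so monotonicity gives $\mathrm{Ind}(F)\supseteq\mathrm{Ind}(\Omega)=\langle U^3\rangle$, and this inclusion may well be strict; nothing in your plan rules out $2U^2\in\mathrm{Ind}(F)$, in which case your comparison yields no contradiction (the paper's own square-peg computation in Section~\ref{Sec:Final-2} illustrates how easily the configuration-space index becomes too large). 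The price of using the bigger space $\Omega$ is that a zero of the test map could a priori have coincident points, and the paper pays it with the short metric argument of Proposition~\ref{prop:testmap}: since the tuple is not in $Y$, say $\xi_1\ne\xi_3$, injectivity of $f$ gives $d_{13}=d_{24}\ne 0$, and if two adjacent points coincided all four cycle distances would vanish, forcing $d_{13}\le d_{12}+d_{23}=0$, a contradiction. Your proposal, by taking distinctness as automatic on the deleted product, misses exactly this trade-off, which is the key idea of the reduction. (A minor slip besides: the $D_8$-fixed subspace of $\R^4\oplus\R^2$ is a plane, not a line, though your identification of $W\cong U_4\oplus U_2$ as a $(3+1)$-dimensional representation is correct.)
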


\begin{remark*}
The proof is not going to use any properties of $\mathbb{R}^3$ except that
it is a metric space. Thus in the statement of the theorem, $\mathbb{R}^3$
can be replaced by any metric space $(M,d)$.
\end{remark*}

\begin{remark*}
Unfortunately, the methods used for the proof of Theorem \ref{Th:Main-1} do
not imply any conclusion when applied to the square peg problem (see Section 
\ref{Sec:Final-2}). On the other hand, if the square peg problem could be
solved for the continuous Jordan curves, then it would imply the result of
Theorem \ref{Th:Main-1}.
\end{remark*}

\section{\label{Sec:CSTM}Introducing the equivariant question}

Let $f:S^2\rightarrow\mathbb{R}^3$ be an injective continuous map. Denote by
$D_8$ the symmetry group of a square, that is, the $8$-element dihedral
group $D_8 = \langle \omega ,j~|~\omega^4=j^2=1,~\omega j=j\omega ^3\rangle $.

\subsubsection*{A few $D_8$-representations.}

The vector spaces
\begin{eqnarray*}
U_4 &=&\{(x_1,x_2,x_3,x_4)\in\mathbb{R}^4~|~x_1+x_2+x_3+x_4=0\}, \\
U_2 &=&\{(x_1,x_2)\in\mathbb{R}^2~|~x_1+x_2=0\}
\end{eqnarray*}
are $D_8$-representations with actions given by

(a) for $(x_1,x_2,x_3,x_4)\in U_4$:
\begin{equation*}
\omega \cdot (x_1,x_2,x_3,x_4)=(x_2,x_3,x_4,x_1), \qquad j\cdot
(x_1,x_2,x_3,x_4)=(x_3,x_2,x_1,x_4),
\end{equation*}

(b) for $(x_1,x_2)\in U_2:$
\begin{equation*}
\omega \cdot (x_1,x_2)=(x_2,x_1), \qquad j\cdot (x_1,x_2)=(x_2,x_1),
\end{equation*}

\subsubsection*{The configuration space.}

Let $X=S^2\times S^2\times S^2\times S^2$ and let $Y$ be the subspace given
by
\begin{equation*}
Y=\left\{ (x,y,x,y)~|~x,y\in S^2\right\} \approx S^2\times S^2.
\end{equation*}
The configuration space to be considered is the space
\begin{equation*}
\Omega :=X\backslash Y.
\end{equation*}

\noindent Let a $D_8$-action on $X$ be induced by
\begin{equation}
\begin{array}{ccc}
\omega \cdot (\xi_1,\xi_2,\xi_3,\xi_4)=(\xi_2,\xi_3,\xi _4,\xi_1), &  &
j\cdot (\xi_1,\xi_2,\xi_3,\xi_4)=(\xi _4,\xi_3,\xi_2,\xi_1),
\end{array}
\notag
\end{equation}
for $(\xi_1,\xi_2,\xi_3,\xi_4)\in X$.

\subsubsection*{A test map.}

Let $\tau :\Omega \rightarrow U_{4}\times U_{2}$ be a map defined for $(\xi
_{1},\xi _{2},\xi _{3},\xi _{4})\in X$ by
\begin{equation}
\tau (\xi _{1},\xi _{2},\xi _{3},\xi _{4})=
( d_{12}-\tfrac\Delta4,d_{23}-\tfrac\Delta4,
  d_{34}-\tfrac\Delta4,d_{41}-\tfrac\Delta4 ) \times 
( d_{13}-\tfrac\Phi2,  d_{24}-\tfrac\Phi2   )   \label{Def-t-1}
\end{equation}
where $d_{ij}:=d(f\left( \xi _{i}\right) ,f\left( \xi _{j}\right) )$ and
\begin{equation*}
\Delta =d_{12}+d_{23}+d_{34}+d_{14},~~\qquad \Phi =d_{13}+d_{24}.
\end{equation*}
With the $D_{8}$-actions introduced above the test map $\tau $ is
$D_{8}$-equivariant. The following proposition connects our set-up 
with the tetrahedron problem.

\begin{proposition}
\label{prop:testmap} If there is no $D_8$-equivariant map
\begin{equation}
\alpha :\Omega \rightarrow (U_4\times U_2)\backslash 
         (\{\mathbf{0}\}\times \{\mathbf{0}\})  \label{map-1}
\end{equation}
then Theorem \ref{Th:Main-1} follows.
\end{proposition}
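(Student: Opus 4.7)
The plan is to argue by contrapositive: assume that the conclusion of Theorem~\ref{Th:Main-1} fails for some injective continuous $f:S^2\to\R^3$, and then exhibit the forbidden map $\alpha$ as the test map $\tau$ itself, restricted to $\Omega$. Since $\tau$ has already been declared continuous and $D_8$-equivariant, the entire burden is to verify that $\tau$ never takes the value $(\mathbf{0},\mathbf{0})$ on $\Omega$.

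Suppose for contradiction that $\tau(\xi_1,\xi_2,\xi_3,\xi_4)=(\mathbf{0},\mathbf{0})$ at some point of $\Omega$. Reading off the two components, this forces
\[
d_{12}=d_{23}=d_{34}=d_{41}=\tfrac{\Delta}{4},\qquad d_{13}=d_{24}=\tfrac{\Phi}{2},
\]
which are precisely the $D_8$-symmetry equalities in the statement of Theorem~\ref{Th:Main-1}. So to complete the contradiction, I need to produce four \emph{distinct} points $\xi_i$; injectivity of $f$ then transfers distinctness to the $f(\xi_i)$, and a tetrahedron of the desired type appears.

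The main obstacle, and the precise reason for removing $Y$ from $X$, lies in establishing this distinctness. I expect the argument to split into two cases. First, if $\Delta/4=0$, then all four consecutive distances vanish and $\xi_1=\xi_2=\xi_3=\xi_4$; such a quadruple lies in $Y$, contradicting membership in $\Omega$. Hence $\Delta/4>0$, and the four consecutive pairs $\xi_i,\xi_{i+1}$ (indices mod $4$) are distinct. Second, if $d_{13}=0$, then also $d_{24}=0$, yielding $\xi_1=\xi_3$ and $\xi_2=\xi_4$, so $(\xi_1,\xi_2,\xi_3,\xi_4)=(\xi_1,\xi_2,\xi_1,\xi_2)\in Y$, again a contradiction. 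Hence $d_{13}>0$, and therefore $\xi_1\neq\xi_3$, $\xi_2\neq\xi_4$. All six pairwise distances $d_{ij}$ are positive, so the four points are pairwise distinct and the required tetrahedron is produced.

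The only genuinely nontrivial step is the bookkeeping aligning the exclusion of $Y$ with the exact conditions under which the two components of $\tau$ can simultaneously vanish. The definition $Y=\{(x,y,x,y)\}$ has been chosen precisely so that this alignment goes through: removing the full diagonal would be too weak (failing case two), while removing more would endanger the equivariance and connectivity of $\Omega$ needed in later sections.
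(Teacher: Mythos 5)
Your proposal is correct and takes essentially the same route as the paper: the test map $\tau$ itself serves as the map $\alpha$ unless it has a zero on $\Omega$, and a zero of $\tau$ yields the stated metric equalities, with the exclusion of $Y$ together with injectivity of $f$ forcing the four points to be distinct. The only (harmless) difference is in the distinctness step, where you rule out $\Delta=0$ by observing that the fully degenerate quadruple lies in $Y$, while the paper instead invokes the triangle inequality $d_{13}\le d_{12}+d_{23}$ to exclude coincidences of consecutive points; both arguments are valid.
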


\begin{proof}
If there is no $D_8$-equivariant map
$\Omega \rightarrow (U_4\times U_2)\backslash 
                    (\{\mathbf0\}\times \{\mathbf0\})$,
then for every continuous embedding $f:S^2\rightarrow\R^3$ there is a point
$\xi =(\xi_1,\xi_2,\xi_3,\xi_4)\in \Omega
=X\backslash Y$ such that
\begin{equation}
\tau (\xi_1,\xi_2,\xi_3,\xi_4)= (\mathbf0,\mathbf0)\in U_4\times U_2. 
    \label{a-2}
\end{equation}
From (\ref{a-2}) we conclude that
\begin{equation}
d_{12}=d_{23}=d_{34}=d_{14}=\tfrac{\Delta }{4}\text{ \quad and \quad }
d_{13}=d_{24}=\tfrac{\Phi }{2}.  \label{a-3}
\end{equation}
It only remains to prove that all four points are different. Since
$(\xi_1,\xi_2,\xi_3,\xi_4)\notin Y$ we have $\xi_1\neq \xi_3$ or
$\xi_2\neq \xi_4$. By symmetry we may assume that $\xi_1\neq \xi_3$.
The map $f$ is injective, therefore $f(\xi_1)\neq f(\xi_3)$ and
consequently $d_{13}\neq 0$. Now
\[
d_{13}\neq 0
\ \ \Rightarrow\ \
d_{24}\neq 0
\ \ \Rightarrow\ \
f(\xi_1)\neq f(\xi_3),\ f(\xi_2)\neq f(\xi_4)
\ \ \Rightarrow\ \
\xi_1\neq \xi_3,\ \xi_2\neq \xi_4.
\]
Let us assume, without loss of generality, that $\xi_1=\xi_2$.
Then $d_{12}=d_{23}=d_{34}=d_{14}=0$, which implies that $d_{13}\leq
d_{12}+d_{23}=0$. This yield a contradiction to $d_{13}\neq 0$. 
Thus $\xi_1\neq \xi_2$.
\end{proof}

By Proposition \ref{prop:testmap}, Theorem \ref{Th:Main-1} is a consequence
of the following topological result.

\begin{theorem}
\label{Th:Main-2}There is no $D_8$-equivariant map $\Omega \rightarrow
S(U_4\times U_2)$.
\end{theorem}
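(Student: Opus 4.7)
The plan is to apply Fadell--Husseini index theory with $\mathbb{Z}$ coefficients. For a finite group $G$ and a $G$-space $Z$, the index $\mathrm{Ind}_G(Z;\mathbb{Z}) \subseteq H^*(BG;\mathbb{Z})$ is the kernel of the restriction map $H^*(BG;\mathbb{Z}) \to H^*_G(Z;\mathbb{Z})$, and the existence of a $G$-equivariant map $A \to B$ forces $\mathrm{Ind}_G(B;\mathbb{Z}) \subseteq \mathrm{Ind}_G(A;\mathbb{Z})$. Thus it suffices to produce a class lying in $\mathrm{Ind}_{D_8}(S(U_4 \times U_2);\mathbb{Z})$ but not in $\mathrm{Ind}_{D_8}(\Omega;\mathbb{Z})$.

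For the index of the sphere, I would first decompose $U_4 \oplus U_2$ into $D_8$-irreducibles: a character calculation gives $U_4 \oplus U_2 \cong \chi_3 \oplus \chi_4 \oplus \rho$, where $\chi_3,\chi_4$ are one-dimensional sign characters and $\rho$ is the faithful two-dimensional irreducible of $D_8$. Since the determinant of this representation is trivial, the representation is orientable, hence has a well-defined integer equivariant Euler class $e \in H^4(BD_8;\mathbb{Z})$, which lies in $\mathrm{Ind}_{D_8}(S(U_4 \times U_2);\mathbb{Z})$ by the Gysin sequence of the sphere bundle. Crucially, in the standard presentation $H^*(D_8;\mathbb{F}_2) = \mathbb{F}_2[x,y,v]/(xy)$ with $|x|=|y|=1$ and $|v|=2$, the mod-$2$ Euler class $w_4(U_4 \oplus U_2)$ contains the product $xy$ as a factor and therefore vanishes --- this is precisely why the $\mathbb{F}_2$-version of the argument has no power. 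The integer class $e$, on the other hand, is expected to be nontrivial as a $2$-torsion element, realized as the Bockstein of a class in $H^3(D_8;\mathbb{F}_2)$.

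To compute $\mathrm{Ind}_{D_8}(\Omega;\mathbb{Z})$ and show that $e$ is not in it, I would use the equivariant long exact sequence of the pair $(X,Y)$ together with the Thom isomorphism for the equivariant normal bundle of $Y$ in $X$:
\[
\cdots \lra H^k_{D_8}(X,Y;\mathbb{Z}) \lra H^k_{D_8}(X;\mathbb{Z}) \lra H^k_{D_8}(Y;\mathbb{Z}) \lra \cdots
\]
Both $H^*_{D_8}(X;\mathbb{Z})$ and $H^*_{D_8}(Y;\mathbb{Z})$ are accessible via the Leray--Serre spectral sequences of the Borel fibrations over $BD_8$, whose $E_2$-pages are determined by the $D_8$-module structures on $H^*((S^2)^4;\mathbb{Z})$ and $H^*((S^2)^2;\mathbb{Z})$, respectively. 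Tracking $e$ along the composition $H^*(BD_8;\mathbb{Z}) \to H^*_{D_8}(X;\mathbb{Z}) \to H^*_{D_8}(\Omega;\mathbb{Z})$ should then produce the required nonvanishing and hence the contradiction with any hypothetical equivariant map to the sphere.

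The main obstacle is precisely this final comparison. Because the target obstruction class is pure $2$-torsion, it is invisible to mod-$2$ Stiefel--Whitney detections and to the usual restriction-to-elementary-abelian-subgroup techniques; one instead has to trace integer classes through Bocksteins and through the spectral-sequence differentials for $X$, $Y$ and $\Omega$. The whole point of using integer coefficients --- and the reason the argument succeeds where the $\mathbb{F}_2$-argument must fail --- is that this $2$-torsion obstruction class genuinely encodes information that is not visible mod $2$; correctly identifying it inside $H^*_{D_8}(\Omega;\mathbb{Z})$ and verifying its nonvanishing there is the essential technical challenge.
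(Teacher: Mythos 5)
Your overall framework is the right one (and matches the paper's philosophy): monotonicity of the Fadell--Husseini index with $\mathbb{Z}$-coefficients, the observation that the representation $U_4\times U_2$ is orientable so its sphere's index is generated by an integral Euler class which is $2$-torsion and hence invisible over $\mathbb{F}_2$. This reproduces, in $D_8$-form, the easy half of the argument; the paper does the same computation after restricting to the cyclic subgroup $\mathbb{Z}_4=\langle\omega\rangle$, where the Euler class is the top Chern class $c_2(V^1\oplus(V^1\otimes V^1))=2U^2\in H^4(\mathbb{Z}_4;\mathbb{Z})\cong\mathbb{Z}_4$, and the index of the sphere is $\langle 2U^2\rangle$.

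However, the other half --- showing that this Euler class does \emph{not} lie in the index of $\Omega$ --- is exactly where the content of the theorem sits, and your proposal leaves it as a plan (``should then produce the required nonvanishing'', ``expected to be nontrivial''), so there is a genuine gap. Two concrete problems. First, the exact sequence you display is for the pair $(X,Y)$, which never sees $\Omega$; the Thom isomorphism for the normal bundle of $Y$ identifies $H^*_{G}(X,X\setminus Y)\cong H^{*-4}_{G}(Y)$, so the relevant pair is $(X,\Omega)$, and even then you must verify equivariant orientability of the normal bundle and identify the image of the Thom class in $H^4_G(X)$ before you can decide whether $e$ dies in $H^4_G(\Omega)$ --- none of which is carried out. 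Second, you never supply the mechanism that actually pins down the answer. In the paper this is done by computing $H^*(\Omega;\mathbb{Z})$ as a $\mathbb{Z}_4$-module via Poincar\'e--Lefschetz duality, $H^*(\Omega;\mathbb{Z})\cong H_{8-*}(X,Y;\mathbb{Z})$, feeding this into the Serre spectral sequence of the Borel construction, and then using the crucial fact that the $\mathbb{Z}_4$-action on $\Omega$ is \emph{free}, so $H^i_{\mathbb{Z}_4}(\Omega;\mathbb{Z})=0$ for $i>8$; this dimension constraint forces all the nonzero differentials and yields $\mathrm{Index}_{\mathbb{Z}_4,\mathbb{Z}}\Omega=\langle U^3\rangle$, whence $2U^2\notin\mathrm{Index}_{\mathbb{Z}_4,\mathbb{Z}}\Omega$. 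Your outline contains no substitute for this freeness/dimension argument (nor the module computations behind it), and working with the full group $D_8$ instead of $\mathbb{Z}_4$ only makes the missing computation harder, since $H^*(D_8;\mathbb{Z})$ and the relevant local coefficient systems are more involved. As it stands, the proposal is a reasonable strategy statement, not a proof.
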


\section{Proof of Theorem \protect\ref{Th:Main-2}}

The proof is going to be conducted through a comparison of the Serre
spectral sequences with $\mathbb{Z}$-coefficients of the Borel constructions
associated with the spaces $\Omega $ and $S(U_{4}\times U_{2})$ and the
subgroup $\mathbb{Z}_{4}=\langle \omega \rangle $ of $D_{8}$. In other
words, we determine the $\mathbb{Z}_{4}$ Fadell--Husseini index of these
spaces living in 
$H_{4}^{\ast }(\mathbb{Z}_{4};\mathbb{Z})=\mathbb{Z}[U]/4U$, $\deg U=2$.

The Fadell--Husseini index of a $G$-space $X$ is the kernel of the map 
$\pi_{X}^{\ast }:H^{\ast }(\mathrm{B}G,\mathbb{Z})\rightarrow 
                 H^{\ast }(X\times_{G}\mathrm{E}G,\mathbb{Z)}$ 
induced by the projection 
$\pi _{X}:X\times _{G}\mathrm{E}G\rightarrow \mathrm{B}G$. 
If $E_{\ast }^{\ast ,\ast }$ denotes
the Serre spectral sequence of the Borel construction of $X$, then the
homomorphism $\pi _{X}^{\ast }$ can be presented as the composition%
\begin{equation}
H^{\ast }(\mathrm{B}G,\mathbb{Z})\rightarrow E_{2}^{\ast ,0}\rightarrow
E_{3}^{\ast ,0}\rightarrow E_{4}^{\ast ,0}\rightarrow ...\rightarrow
E_{\infty }^{\ast ,0}\subseteq H^{\ast }(X\times _{G}\mathrm{E}G,\mathbb{Z}).
\label{eq-1}
\end{equation}
Since the $E_{2}$-term of the spectral sequence is given by
$E_{2}^{p,q}=H^{p}(\mathrm{B}G,H^{q}(X,\mathbb{Z}))$ the first step
in the computation of the index is study of the cohomology
$H^{\ast }(X,\mathbb{Z})$ as a $G$-module (Section \ref{Sec1}).
The final step is explicit description of non-zero differentials
in the spectral sequence and application of the
presentation (\ref{eq-1}) of the homomorphism $\pi _{X}^{\ast }$ 
(Section \ref{Sec:SS}).

\subsection{The Index of $S(U_{4}\times U_{2})$}

Let $V^1$ be the $1$-dimensional complex representation of $\mathbb{Z}_4$
induced by $1\mapsto e^{i\pi/2}$. Then the representation
$U_4\subset\mathbb{R}^4$ seen as a $\mathbb{Z}_4$-representation 
decomposes into a sum of two
irreducible $\mathbb{Z}_4$-representations
\begin{equation*}
U_4=\mathrm{span}\{(1,0,-1,0),(0,1,0,-1)\} \oplus 
    \mathrm{span}\{(1,-1,1,-1)\}                  \cong V^1\oplus U_2.
\end{equation*}
Here \textquotedblleft $\mathrm{span}$\textquotedblright\ stands for all
$\mathbb{R}$-linear combinations of the given vectors. It can be also seen
that
\begin{equation*}
U_4\times U_2\cong V^1\oplus U_2\oplus U_2\cong V^1\oplus (V^1\otimes V^1).
\end{equation*}
Following \cite[Section 8, p.~271 and Appendix, page 285]{Aty} we deduce the
total Chern class of the $\mathbb{Z}_4$-representation $U_4\times U_2$
\begin{equation*}
c(U_4\times U_2)=c(V^1)\cdot c(V^1\otimes V^1)
\end{equation*}
and consequently the top Chern class
\begin{equation*}
c_2(U_4\times U_2)=c_1(V^1)\cdot c_1(V^1\otimes V^1)=c_1(V^1)\cdot
(c_1(V^1)+c_1(V^1))=2U^2\in H^*(\mathbb{Z}_4;\mathbb{Z}).
\end{equation*}
The $\mathbb{Z}_4$-index of the sphere $S(U_4\times U_2)$ is given by
\cite[Proposition 3.11]{B-Z} as
\begin{equation}
\mathrm{Index}_{\mathbb{Z}_4,\mathbb{Z}}S(U_4\times U_2)=\langle 2U^2\rangle.  
\label{index-1}
\end{equation}

\subsection{\label{Sec1}The cohomology $H^{\ast }(\Omega ;\mathbb{Z})$ 
                        as a $\mathbb{Z}_{4}$-module}

The cohomology is going to be determined via Poincar\'{e}--Lefschetz duality
and an explicit study of cell structures for the spaces $X$ and $Y$.

Poincar\'{e}--Lefschetz duality \cite[Theorem 70.2, page 415]{Mun} implies
that
\begin{equation}
H^*(\Omega;\mathbb{Z})=H^*(X\backslash Y;\mathbb{Z})\cong 
                       H_{8-\ast }(X,Y;\mathbb{Z})  \label{iso-1}
\end{equation}
and therefore we analyze the homology of the pair $(X,Y)$.

The long exact sequence in homology of the pair $(X,Y)$ yields that the
possibly non-zero homology groups of the pair $(X,Y)$ with
$\mathbb{Z}$-coefficients are
\begin{equation*}
H_{i}(X,Y;\mathbb{Z})=\left\{
\begin{array}{lll}
\mathbb{Z}[\mathbb{Z}_4]/\mathrm{im}\Phi , &  & i=2 \\
\ker \Phi , &  & i=3 \\
\mathbb{Z}[\mathbb{Z}_4]\oplus
\mathbb{Z}[\mathbb{Z}_4/\mathbb{Z}_2]/\mathrm{im}\Psi , &  & i=4 \\
\ker \Psi , &  & i=5 \\
\mathbb{Z}[\mathbb{Z}_4], &  & i=6 \\
\mathbb{Z}, &  & i=8
\end{array}
\right.
\end{equation*}
Thus explicit formulas for the maps $\Phi :H_2(Y;\mathbb{Z})\rightarrow
H_2(X;\mathbb{Z})$ and $\Psi :H_4(Y;\mathbb{Z})\rightarrow H_4(X;\mathbb{Z})$,
induced by the inclusion $Y\subset X$, are needed in order to determine
the homology $H_{\ast }(X,Y;\mathbb{Z})$ and its exact $\mathbb{Z}_4$-module
structure.

Let $x_1,x_2,x_3,x_4\in H_2(X;\mathbb{Z})$ be generators carried by
individual copies of $S^2$ in the product $X=S^2\times S^2\times S^2\times
S^2$. The generator of the group $\mathbb{Z}_4=\langle \omega \rangle $ acts
on this basis of $H_2(X;\mathbb{Z})$ by $\omega \cdot x_{i}=x_{i+1}$ where
$x_{5}=x_1$. Then by $x_{i}x_{j}\in H_4(X;\mathbb{Z})$, $i\neq j$, we denote
the generator carried by the product of $i$-th and $j$-th copy of $S^2$ in $X$. 
Since $\omega $ is not changing the orientation the action on 
$H_4(X;\mathbb{Z})$ is described by
\begin{equation*}
x_1x_2\overset{\cdot \omega } \longmapsto 
x_2x_3\overset{\cdot \omega } \longmapsto 
x_3x_4\overset{\cdot \omega } \longmapsto  
x_1x_4\text{ \qquad and \qquad }
x_1x_3\overset{\cdot \omega }{\longmapsto }x_2x_4.
\end{equation*}
Let similarly $y_1,y_2\in H_2(X;\mathbb{Z})$ be generators carried by
individual copies of $S^2$ in the product $Y=S^2\times S^2$. Then $\omega
\cdot y_1=y_2$ and $\omega \cdot y_2=y_1$. Again $y_1y_2$ denotes the
generator of $H_4(Y;\mathbb{Z})$ and $\omega \cdot y_1y_2=y_1y_2$. Note that
$\omega $ preserves the orientations of $X$ and $Y$ and therefore acts
trivially on $H_8(X;\mathbb{Z})$ and on $H_4(Y;\mathbb{Z})$.

\medskip

\noindent The inclusion $Y\subset X$ induces a map in homology 
$H_{\ast }(X;\mathbb{Z})\subset H_{\ast }(Y;\mathbb{Z})$, 
which in dimensions $2$ and $4$ is given by
\begin{equation*}
\begin{tabular}{cc}
$y_1\longmapsto x_1+x_3,$ & $y_2\longmapsto x_2+x_4,$ \\
&  \\
\multicolumn{2}{c}{$y_1y_2\longmapsto x_1x_2+x_2x_3+x_3x_4+x_1x_{4.}$}
\end{tabular}
\end{equation*}
This can be seen from the dual cohomology picture: An element is mapped to a
sum of generators intersecting its image, with appropriately attached
intersection numbers.

\noindent Thus $\Phi $ and $\Psi $ are injective and
\begin{equation*}
\begin{array}{ccc}
\mathrm{im}\Phi =\langle x_1+x_3,x_2+x_4\rangle , &  & \mathrm{im}\Psi
=\langle x_1x_2+x_2x_3+x_3x_4+x_1x_4\rangle .
\end{array}
\end{equation*}
Let $N=\mathbb{Z}\oplus\mathbb{Z}$ be the $\mathbb{Z}_4$-representation
given by $\omega \cdot (a,b)=(b,-a)$, while $M$ denotes the representation 
$\mathbb{Z}[\mathbb{Z}_4]/_{(1+\omega +\omega ^2+\omega ^3)\mathbb{Z}}$. Then
the non-trivial cohomology of the space $X\backslash Y$, as a 
$\mathbb{Z}_4$-module via the isomorphism (\ref{iso-1}), is given by
\begin{equation}
H^{i}(\Omega;\mathbb{Z})=\left\{
\begin{array}{lll}
N, &  & i=6 \\
M{\ \oplus }\mathbb{Z}[\mathbb{Z}_4/\mathbb{Z}_2], &  & i=4 \\
\mathbb{Z}[\mathbb{Z}_4], &  & i=2 \\
\mathbb{Z}, &  & i=0
\end{array}
\right.  \label{iso-2}
\end{equation}

\subsection{\label{Sec:SS} The Serre spectral sequence of the Borel
construction $\Omega \times_{\mathbb{Z}_4}\mathrm{E}\mathbb{Z}_4$}

The Serre spectral sequence associated to the fibration $\Omega \rightarrow
\Omega \times _{\mathbb{Z}_{4}}\mathrm{E}\mathbb{Z}_{4}\rightarrow 
\mathrm{B}\mathbb{Z}_{4}$ is a spectral sequence with non-trivial 
local coefficients,
since $\pi _{1}(\mathrm{B}\mathbb{Z}_{4})=\mathbb{Z}_{4}$ acts non-trivially
(\ref{iso-2}) on the cohomology $H^{\ast }(\Omega ;\mathbb{Z})$. The first
step in the study of such a spectral sequence is to understand the $H^{\ast
}(\mathbb{Z}_{4};\mathbb{Z})$-module structure on the rows of its $E_{2}$-term.

\noindent The $E_2$-term of the sequence is given by
\begin{equation*}
E_2^{p,q}=\left\{
\begin{array}{lll}
H^p(\mathbb{Z}_4,N), &  & q=6 \\
H^p(\mathbb{Z}_4,M) \oplus
H^p(\mathbb{Z}_4;\mathbb{Z}[\mathbb{Z}_4/\mathbb{Z}_2]), &  & q=4 \\
H^p(\mathbb{Z}_4;\mathbb{Z}[\mathbb{Z}_4]), &  & q=2 \\
H^p(\mathbb{Z}_4;\mathbb{Z}), &  & q=0 \\
0, &  & \text{otherwise.}
\end{array}
\right.
\end{equation*}

\begin{lemma}
$H^{p}(\mathbb{Z}_4;\mathbb{Z}[\mathbb{Z}_4]) = \left\{
\begin{array}{lll}
\mathbb{Z}, &  & p=0 \\
0, &  & p>0
\end{array}
\right. $
\newline
and multiplication by $U\in H^{p}(\mathbb{Z}_4;\mathbb{Z})$ is trivial, 
$U\cdot H^{p}(\mathbb{Z}_4;\mathbb{Z}[\mathbb{Z}_4])=0$.
\end{lemma}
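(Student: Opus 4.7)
The plan is to recognize the coefficient module $\mathbb{Z}[\mathbb{Z}_4]$ as induced from the trivial subgroup and apply Shapiro's lemma. Since $\mathbb{Z}[\mathbb{Z}_4] = \mathrm{Ind}_{\{1\}}^{\mathbb{Z}_4}\mathbb{Z}$, we obtain
\[
H^p(\mathbb{Z}_4;\mathbb{Z}[\mathbb{Z}_4])\ \cong\ H^p(\{1\};\mathbb{Z}),
\]
which is $\mathbb{Z}$ for $p=0$ and vanishes for $p>0$. This identifies the cohomology groups with the required values.

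Alternatively, one can carry out the computation directly from the standard $4$-periodic free resolution
\[
\cdots \lra \mathbb{Z}[\mathbb{Z}_4] \xrightarrow{N} \mathbb{Z}[\mathbb{Z}_4] \xrightarrow{\omega-1} \mathbb{Z}[\mathbb{Z}_4] \xrightarrow{N} \mathbb{Z}[\mathbb{Z}_4] \xrightarrow{\omega-1} \mathbb{Z}[\mathbb{Z}_4] \lra \mathbb{Z} \lra 0,
\]
where $N=1+\omega+\omega^2+\omega^3$. Applying $\mathrm{Hom}_{\mathbb{Z}[\mathbb{Z}_4]}(-,\mathbb{Z}[\mathbb{Z}_4])$ gives back the complex $\mathbb{Z}[\mathbb{Z}_4]\xrightarrow{\omega-1}\mathbb{Z}[\mathbb{Z}_4]\xrightarrow{N}\mathbb{Z}[\mathbb{Z}_4]\xrightarrow{\omega-1}\cdots$, whose cohomology vanishes in positive degrees (since the sequence is exact at each non-initial spot) and equals $\ker(\omega-1)=\mathbb{Z}\cdot N\cong\mathbb{Z}$ in degree $0$. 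This confirms the same answer without invoking Shapiro, and serves as a sanity check.

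For the second assertion, cup product with $U\in H^2(\mathbb{Z}_4;\mathbb{Z})$ is a map
\[
U\cdot(-)\ :\ H^p(\mathbb{Z}_4;\mathbb{Z}[\mathbb{Z}_4]) \lra H^{p+2}(\mathbb{Z}_4;\mathbb{Z}[\mathbb{Z}_4]).
\]
By the first part the target vanishes for every $p\geq 0$, since $p+2\geq 2>0$. Hence $U\cdot H^p(\mathbb{Z}_4;\mathbb{Z}[\mathbb{Z}_4])=0$ automatically, and no separate argument is needed.

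There is no real obstacle here: once one invokes Shapiro's lemma (or, equivalently, the observation that $\mathbb{Z}[\mathbb{Z}_4]$ is a free $\mathbb{Z}[\mathbb{Z}_4]$-module and hence cohomologically trivial above degree zero), both statements follow immediately. The only care required is to be explicit that the $\mathbb{Z}_4$-action on $\mathbb{Z}[\mathbb{Z}_4]$ is the regular one, so that the induction description is the correct one.
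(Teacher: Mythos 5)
Your proof is correct and follows essentially the route the paper intends: the paper simply cites \cite[Exercise 2, page 58]{Brown}, i.e.\ the standard fact that the regular (induced/coinduced) module $\mathbb{Z}[\mathbb{Z}_4]$ has vanishing higher cohomology, which is exactly your Shapiro's-lemma argument (with the periodic-resolution computation as an equivalent check), and the $U$-multiplication statement then follows for degree reasons just as you say.
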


For the proof one can consult \cite[Exercise 2, page 58]{Brown}.

\begin{lemma}
$H^*(\mathbb{Z}_4;\mathbb{Z}[\mathbb{Z}_4/\mathbb{Z}_2])\cong 
 H^*(\mathbb{Z}_2;\mathbb{Z})$, 
where the module structure is given by the restriction homomorphism 
$\mathrm{res}_{\mathbb{Z}_2}^{\mathbb{Z}_4}:
 H^*(\mathbb{Z}_4;\mathbb{Z})\rightarrow H^*(\mathbb{Z}_2;\mathbb{Z})$.

\noindent In other words, if we denote 
$H^*(\mathbb{Z}_2;\mathbb{Z})=\mathbb{Z}[ T]/2T$, $\deg T=2$, then
$\mathrm{res}_{\mathbb{Z}_2}^{\mathbb{Z}_4}(U)=T $ and consequently:

\begin{compactenum}[\rm(A)]
\item $H^*(\Z_4;\Z[\Z_4/\Z_2])$ is generated by one element of degree $0$
as a $H^*(\Z_4;\Z)$-module, and
\item multiplication by $U$ in $H^*(\Z_4;\Z[\Z_4/\Z_2])$ is an isomorphism,
while multiplication by $2U$ is zero.
\end{compactenum}
\end{lemma}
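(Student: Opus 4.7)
The plan is to apply the Eckmann--Shapiro lemma for the subgroup
$\mathbb{Z}_{2}=\langle\omega^{2}\rangle \leq \mathbb{Z}_{4}$. Since
\[
\mathbb{Z}[\mathbb{Z}_{4}/\mathbb{Z}_{2}] \;\cong\;
\mathbb{Z}[\mathbb{Z}_{4}] \otimes_{\mathbb{Z}[\mathbb{Z}_{2}]} \mathbb{Z}
\;=\; \mathrm{Ind}_{\mathbb{Z}_{2}}^{\mathbb{Z}_{4}}\mathbb{Z}
\]
is the module induced from the trivial $\mathbb{Z}_{2}$-module $\mathbb{Z}$, Shapiro's lemma produces a canonical isomorphism of graded abelian groups
\[
H^{*}(\mathbb{Z}_{4};\mathbb{Z}[\mathbb{Z}_{4}/\mathbb{Z}_{2}])
\;\cong\; H^{*}(\mathbb{Z}_{2};\mathbb{Z}) \;=\; \mathbb{Z}[T]/2T,
\]
which settles the underlying additive claim immediately.

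The real content of the lemma is then the tracking of the $H^{*}(\mathbb{Z}_{4};\mathbb{Z})$-module structure across this identification. For this, the plan is to invoke the standard naturality of the Shapiro isomorphism with respect to cup products (a well-known fact from group cohomology, cf.\ Brown's \emph{Cohomology of Groups}): under the isomorphism, cup product with $x \in H^{*}(\mathbb{Z}_{4};\mathbb{Z})$ on the left corresponds to cup product with the restricted class $\mathrm{res}_{\mathbb{Z}_{2}}^{\mathbb{Z}_{4}}(x)$ on the right. This reduces the entire module-structure assertion to the single computation $\mathrm{res}_{\mathbb{Z}_{2}}^{\mathbb{Z}_{4}}(U)=T$.

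For that computation, the Chern-class description from the previous subsection is ideally suited: $U = c_{1}(V^{1})$ is the first Chern class of the defining complex representation $V^{1}$ of $\mathbb{Z}_{4}$. The restriction of $V^{1}$ to $\mathbb{Z}_{2}=\langle\omega^{2}\rangle$ is the nontrivial $1$-dimensional complex representation of $\mathbb{Z}_{2}$, and its first Chern class is by convention the generator $T \in H^{2}(\mathbb{Z}_{2};\mathbb{Z})$. Naturality of Chern classes with respect to the pullback along $\mathrm{B}\mathbb{Z}_{2} \to \mathrm{B}\mathbb{Z}_{4}$ therefore gives $\mathrm{res}(U) = T$.

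Once $U$ has been identified with $T$, (A) and (B) fall out by inspection of $\mathbb{Z}[T]/2T$: the ring is generated by $1 \in H^{0}$ over itself; multiplication by $T$ acts as the isomorphism $T^{k} \mapsto T^{k+1}$ on the nonzero positive-degree groups $\mathbb{Z}/2$; and the relation $2T=0$ forces $2U$ to act trivially. The only non-routine step is thus the compatibility of the Shapiro isomorphism with cup products, which I would cite rather than reprove; once that is in hand, the lemma collapses to a straightforward Chern-class naturality statement.
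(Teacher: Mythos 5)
Your proposal is correct and follows essentially the same route as the paper: Shapiro's lemma identifies $H^*(\mathbb{Z}_4;\mathbb{Z}[\mathbb{Z}_4/\mathbb{Z}_2])$ with $H^*(\mathbb{Z}_2;\mathbb{Z})$, and the $H^*(\mathbb{Z}_4;\mathbb{Z})$-module structure is transported through that isomorphism via the restriction homomorphism. The only difference is that you establish $\mathrm{res}_{\mathbb{Z}_2}^{\mathbb{Z}_4}(U)=T$ by naturality of Chern classes applied to $V^1$ restricted to $\langle\omega^2\rangle$, where the paper simply cites the restriction diagram of \cite[Section 4.5.2]{B-Z}; this is a self-contained and perfectly valid substitute for that citation.
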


The proof is a direct application of Shapiro's lemma 
\cite[(6.3), page 73]{Brown} and a small part of the 
restriction diagram \cite[Section 4.5.2]{B-Z}.

\begin{lemma}
Let $\Lambda \in $ $H^*(\mathbb{Z}_4,M)$ denote an element of degree $1$
such that $4\Lambda =0$.\newline
Then $H^*(\mathbb{Z}_4,M)\cong H^*(\mathbb{Z}_4;\mathbb{Z})\cdot \Lambda $
as an $H^*(\mathbb{Z}_4;\mathbb{Z})$-module.
\end{lemma}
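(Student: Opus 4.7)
The plan is to exploit the short exact sequence of $\mathbb{Z}_4$-modules
\[
0\lra \mathbb{Z}\xrightarrow{\ N\ }\mathbb{Z}[\mathbb{Z}_4]\lra M\lra 0,
\]
in which the first map sends $1$ to the norm element $N:=1+\omega+\omega^2+\omega^3$ and the cokernel is $M$ by definition. The middle term has vanishing positive cohomology by the previous lemma, so the associated long exact sequence will collapse to a degree-shifting connecting isomorphism that turns the ring $H^{*+1}(\mathbb{Z}_4;\mathbb{Z})$ into $H^*(\mathbb{Z}_4;M)$.

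Concretely, applying $H^*(\mathbb{Z}_4;-)$ and combining $H^{\geq 1}(\mathbb{Z}_4;\mathbb{Z}[\mathbb{Z}_4])=0$ with the observation that the degree-zero map $\mathbb{Z}\to(\mathbb{Z}[\mathbb{Z}_4])^{\mathbb{Z}_4}=N\cdot\mathbb{Z}$ is an isomorphism, the long exact sequence forces $H^0(\mathbb{Z}_4;M)=0$ and yields the connecting isomorphism
\[
\delta\colon H^p(\mathbb{Z}_4;M)\xrightarrow{\ \sim\ }H^{p+1}(\mathbb{Z}_4;\mathbb{Z})
\qquad\text{for all }p\geq 0.
\]
Setting $\Lambda:=\delta^{-1}(U)\in H^1(\mathbb{Z}_4;M)$ then produces a degree-one class with $4\Lambda=0$, since $4U=0$.

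To promote this additive calculation to the claimed $H^*(\mathbb{Z}_4;\mathbb{Z})$-module identity, I would invoke $H^*(\mathbb{Z}_4;\mathbb{Z})$-linearity of $\delta$ (a standard property of connecting homomorphisms for short exact sequences of modules; one clean way to see it is to realize $\delta$ as Yoneda product with the extension class in $\mathrm{Ext}^1_{\mathbb{Z}[\mathbb{Z}_4]}(M,\mathbb{Z})$). Linearity gives $\delta(U^k\cdot\Lambda)=U^k\cdot\delta(\Lambda)=U^{k+1}$, which generates $H^{2k+2}(\mathbb{Z}_4;\mathbb{Z})\cong\mathbb{Z}/4$; therefore $U^k\Lambda$ generates $H^{2k+1}(\mathbb{Z}_4;M)$. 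Combined with the vanishing $H^{2k}(\mathbb{Z}_4;M)\cong H^{2k+1}(\mathbb{Z}_4;\mathbb{Z})=0$ for $k\geq 1$ and $H^0(\mathbb{Z}_4;M)=0$, this shows that every homogeneous element of $H^*(\mathbb{Z}_4;M)$ lies in $H^*(\mathbb{Z}_4;\mathbb{Z})\cdot\Lambda$, proving the lemma.

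The only genuinely non-formal point is the $H^*(\mathbb{Z}_4;\mathbb{Z})$-linearity of $\delta$; once that is granted, the remainder is bookkeeping inside a long exact sequence in which all but two columns vanish. Everything else (existence of $\Lambda$, its order, and the stepwise generation of the odd-degree pieces) is read off directly from the collapsed sequence.
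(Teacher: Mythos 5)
Your argument is correct and is essentially the paper's own proof: the same short exact sequence $0\to\mathbb{Z}\xrightarrow{1+\omega+\omega^2+\omega^3}\mathbb{Z}[\mathbb{Z}_4]\to M\to 0$, the vanishing of $H^{>0}(\mathbb{Z}_4;\mathbb{Z}[\mathbb{Z}_4])$, and the $H^*(\mathbb{Z}_4;\mathbb{Z})$-linearity (naturality) of the resulting long exact sequence, giving the degree-shifting isomorphism onto $H^{*+1}(\mathbb{Z}_4;\mathbb{Z})$ that identifies the module structure. Your explicit check that $H^0(\mathbb{Z}_4;M)=0$ via the surjection of $\mathbb{Z}$ onto the norm-generated invariants is a welcome detail the paper leaves implicit.
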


\begin{proof}
The short exact sequence of $\Z_4$-modules
\begin{equation*}
0\lra\Z\overset{1+\omega +\omega ^2+\omega ^3}{\lra }\Z[\Z_4]\lra M\lra 0
\end{equation*}
induces a long exact sequence in cohomology 
\cite[Proposition 6.1, page 71]{Brown},
which is natural with respect to $H^*(\Z_4;\Z)$-module multiplication.
Since $\Z[\Z_4]$ is a free module we get enough zeros to recover 
the information we need:
\[
\begin{array}{c@{~}c@{~}c@{~}c@{~}c@{~}c@{~}c@{~}c@{~}c@{~}c@{~}c@{~}c}
0 \lra& H^0(\Z_4;\Z)&{\lra}& H^0(\Z_4;\Z[\Z_4]) &{\lra}& H^0(\Z_4,M) &{\lra}& H^1(\Z_4;\Z) &{\lra}\\
       &     \Z       &      &          \Z        &      &             &      &     0               \\[3mm]
       &              &{\lra}& H^1(\Z_4;\Z[\Z_4]) &{\lra}& H^1(\Z_4,M) &{\lra}& H^2(\Z_4;\Z) &{\lra}\\
       &              &      &           0        &      &             &      &    \Z_4             \\[3mm]
       &              &{\lra}& H^2(\Z_4;\Z[\Z_4]) &{\lra}& \ldots  \\
       &              &      &           0        &                \\[4mm]

       &  \ldots      &{\lra}& H^i(\Z_4;\Z[\Z_4]) &{\lra}& H^i(\Z_4,M) &{\lra}& H^{i+1}(\Z_4;\Z) & {\lra}& H^{i+1}(\Z_4;\Z[\Z_4]) &{\lra}~\ldots\\
       &              &      &           0        &      &             &      &                  &       &  0   \\
\end{array}
\]
\end{proof}

\begin{lemma}
Let $\Upsilon \in H^*(\mathbb{Z}_4,N)$ denote an element of degree $1$ such
that $2\Upsilon =0$.\newline
Then $H^*(\mathbb{Z}_4,N)\cong 
H^*(\mathbb{Z}_4;\mathbb{Z}[\mathbb{Z}_4/\mathbb{Z}_2])\cdot \Upsilon $ 
as an $H^*(\mathbb{Z}_4;\mathbb{Z})$-module.
\end{lemma}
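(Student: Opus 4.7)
The plan is to mirror the proof of the previous lemma by realizing $N$ as the quotient of a free $\Z_4$-module by an identifiable submodule, and then reading the result off the associated long exact cohomology sequence. Since $\omega$ acts on $N=\Z\oplus\Z$ by $(a,b)\mapsto(b,-a)$, one has $\omega^2=-\mathrm{id}$, so the element $1+\omega^2\in\Z[\Z_4]$ annihilates $N$. A direct check shows that the surjection $\Z[\Z_4]\twoheadrightarrow N$ defined by $1\mapsto (1,0)$ has kernel exactly the principal ideal $(1+\omega^2)\Z[\Z_4]$, and this kernel has $\Z$-basis $\{1+\omega^2,\ \omega+\omega^3\}$, which $\omega$ swaps. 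Hence as a $\Z_4$-module the kernel is isomorphic to $\Z[\Z_4/\Z_2]$, and we obtain the short exact sequence
\[
0\lra \Z[\Z_4/\Z_2]\lra \Z[\Z_4]\lra N\lra 0.
\]

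I then apply the long exact sequence in cohomology. By the first lemma of this section, $H^i(\Z_4;\Z[\Z_4])=0$ for every $i\geq 1$, so the connecting homomorphism produces isomorphisms
\[
\delta\colon H^i(\Z_4;N)\;\xrightarrow{\ \sim\ }\;H^{i+1}(\Z_4;\Z[\Z_4/\Z_2])\qquad\text{for every } i\geq 1.
\]
In low degrees, the inclusion $\Z[\Z_4/\Z_2]\hookrightarrow \Z[\Z_4]$ sends the norm element $e_1+e_2=1+\omega+\omega^2+\omega^3$ to the norm element of $\Z[\Z_4]$, so $H^0(\Z_4;\Z[\Z_4/\Z_2])\to H^0(\Z_4;\Z[\Z_4])$ is an isomorphism $\Z\xrightarrow{\sim}\Z$; combined with $H^1(\Z_4;\Z[\Z_4/\Z_2])=0$ from the previous lemma, exactness forces $H^0(\Z_4;N)=0$. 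Thus $H^*(\Z_4;N)$ is precisely $H^*(\Z_4;\Z[\Z_4/\Z_2])$ shifted up by one degree.

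To finish, by the previous lemma $H^2(\Z_4;\Z[\Z_4/\Z_2])\cong \Z/2$; I define $\Upsilon\in H^1(\Z_4;N)$ to be the $\delta$-preimage of its generator, so $2\Upsilon=0$ automatically. The connecting homomorphism of a long exact sequence is natural with respect to cup product with classes from $H^*(\Z_4;\Z)$, hence $\delta$ is an isomorphism of $H^*(\Z_4;\Z)$-modules (up to degree shift by one). Combined with part~(A) of the previous lemma — that $H^*(\Z_4;\Z[\Z_4/\Z_2])$ is a cyclic $H^*(\Z_4;\Z)$-module with $U$-multiplication an isomorphism onto positive degrees — this yields the desired presentation $H^*(\Z_4;N)\cong H^*(\Z_4;\Z[\Z_4/\Z_2])\cdot \Upsilon$.

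The one substantive step is the algebraic identification of $(1+\omega^2)\Z[\Z_4]$ with the induced module $\Z[\Z_4/\Z_2]$; once this is set up, the cohomology calculation is a formal chase of the same shape as in the preceding lemma, and the module-structure claim is a consequence of naturality.
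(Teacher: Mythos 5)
Your proof is correct, and it is the mirror image of the paper's argument rather than a literal copy. The paper uses the short exact sequence $0\to N\xrightarrow{\ \alpha\ }\Z[\Z_4]\to L\to 0$ with $\alpha(p,q)=(p,q,-p,-q)$, i.e.\ it embeds $N$ as the submodule spanned by $1-\omega^2$ and $\omega-\omega^3$, so that the quotient $L$ is (implicitly) $\Z[\Z_4/\Z_2]$ and the connecting map shifts $H^*(\Z_4;\Z[\Z_4/\Z_2])$ \emph{up} by one degree; there $\Upsilon$ is simply $\delta$ of the degree-zero generator, and the presentation $H^*(\Z_4;N)\cong H^*(\Z_4;\Z[\Z_4/\Z_2])\cdot\Upsilon$ drops out immediately. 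You instead present $N$ as the quotient $\Z[\Z_4]/(1+\omega^2)\Z[\Z_4]$, identify the kernel with $\Z[\Z_4/\Z_2]$, and shift \emph{down}: $\delta\colon H^i(\Z_4;N)\xrightarrow{\ \sim\ }H^{i+1}(\Z_4;\Z[\Z_4/\Z_2])$ for $i\geq1$, defining $\Upsilon$ as $\delta^{-1}(T)$. Both routes rest on the same two ingredients — the vanishing $H^{>0}(\Z_4;\Z[\Z_4])=0$ and the naturality of the long exact sequence with respect to the $H^*(\Z_4;\Z)$-module structure — so the content is the same; the trade-off is that your version requires the extra low-degree check $H^0(\Z_4;N)=0$ and an appeal to the previous lemma for $H^{1}(\Z_4;\Z[\Z_4/\Z_2])=0$ and for $U\cdot T^k=T^{k+1}$, all of which you carry out, while it has the advantage of making the identification of the complementary module with $\Z[\Z_4/\Z_2]$ completely explicit (a point the paper's proof leaves to the reader, since it never names $L$).
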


\begin{proof}
There is a short exact sequence of $\Z_4$-modules
\begin{equation*}
0\rightarrow N\overset{\alpha }{\rightarrow }\Z[\Z_4]\rightarrow L\rightarrow 0
\end{equation*}
where $L=\Z[\Z_4]/N$ and $\alpha (p,q)=(p,q,-p-q)$. 
The map $\alpha $ is well defined because the following diagram commutes
\begin{equation*}
\begin{array}{ccccc}
N=_{ab}\Z\oplus\Z \ni &(p,q)&\overset{\alpha }{\lra }& (p,q,-p,-q)&
\in\Z[\Z_4]\\
& {\downarrow } {\cdot \omega } &  & {\downarrow } {\cdot \omega } &  \\
N=_{ab}\Z\oplus\Z\ni &(q,-p)& \overset{\alpha }{\lra } & (q,-p,-q,p) & 
\in \Z[\Z_4]
\end{array}
\end{equation*}
The long exact sequence in group cohomology 
\cite[Prop.~6.1, p~71]{Brown} implies the result.
\end{proof}

\medskip

\noindent The $E_2$-term of the Borel construction $\left( X\backslash
Y\right) \times_{\mathbb{Z}_4}\mathrm{E}\mathbb{Z}_4$, 
with the $H^*(\mathbb{Z}_4;\mathbb{Z})$-module structure, 
is presented in Figure \ref{Fig-a1}.

\begin{figure}[tbh]
\centering\includegraphics[scale=0.60]{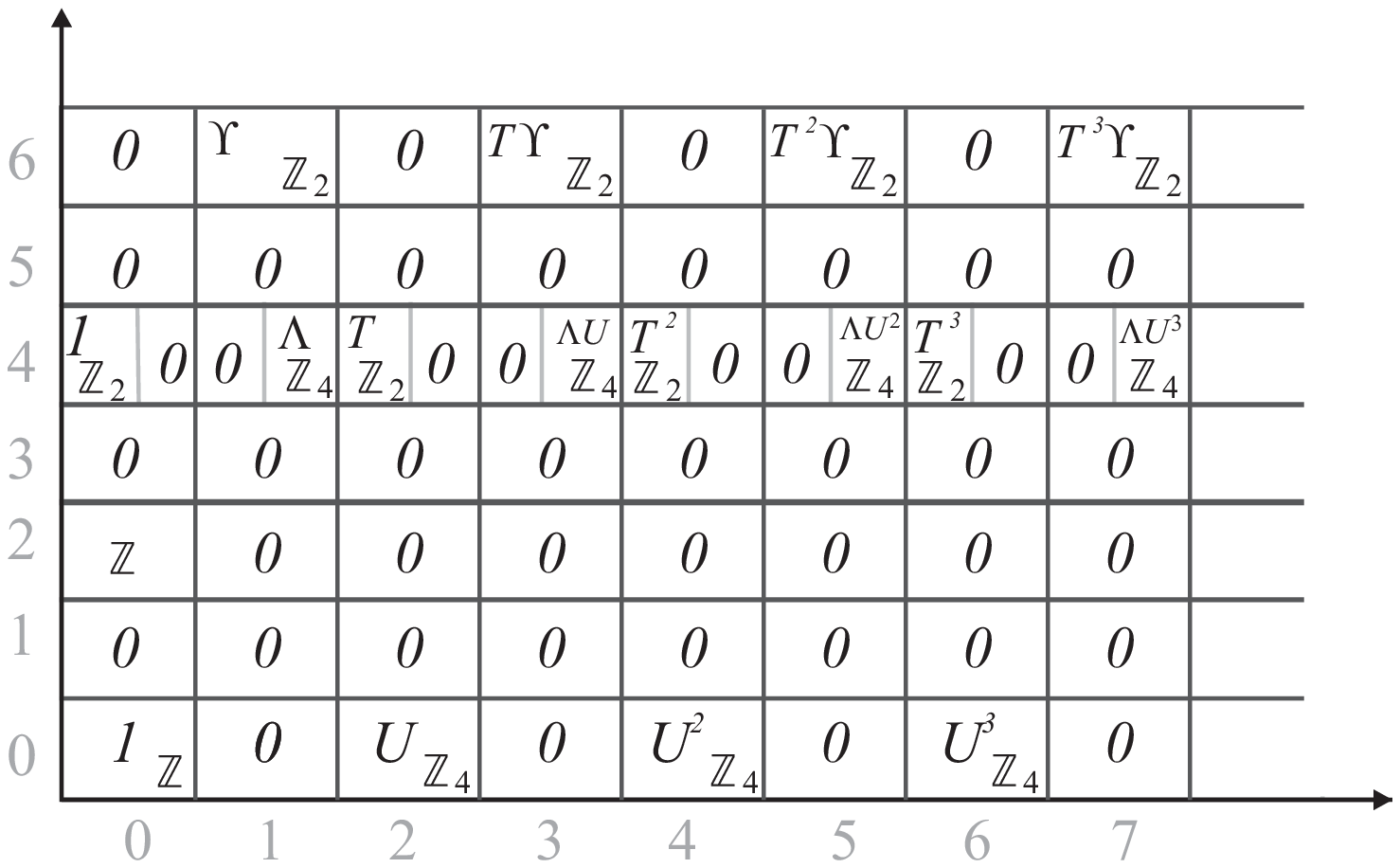}
\caption{The $E_2$-term}
\label{Fig-a1}
\end{figure}
The differentials of the spectral sequence are retrieved from the fact that
the $\mathbb{Z}_4$ action on $\Omega $ is free. Therefore 
$H_{\mathbb{Z}_4}^{i}(\Omega;\mathbb{Z})=0$ for all $i>8$. 
Since the spectral sequence is
converging to the graded group associated with 
$H_{\mathbb{Z}_4}^{i}(\Omega;\mathbb{Z})$ this means that 
for $p+q>8$ nothing survives. Thus the only
non-zero second differentials are 
$d_2:E_2^{2i+1,6}\rightarrow E_2^{2i+4,4}$, $d_2(T^{i}\Upsilon )=T^{i+1}$, 
$i>0$, as displayed in Figure \ref{Fig-ab}.

\begin{figure}[tbh]
\centering\includegraphics[scale=0.85]{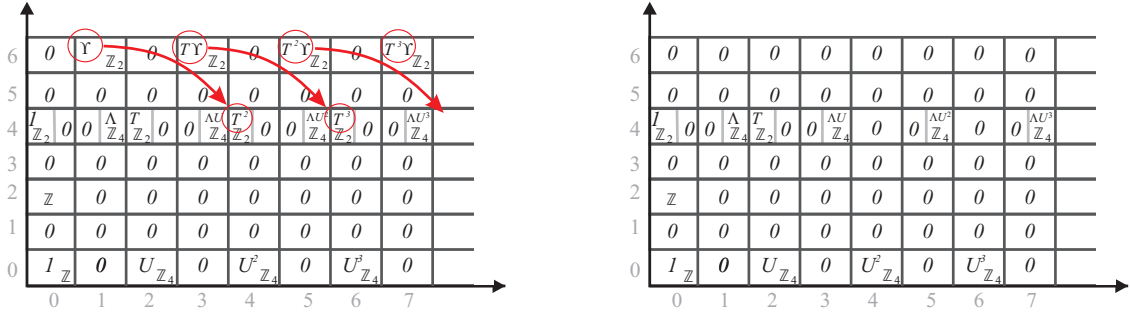}
\caption{Differentials in $E_2$ and $E_3$-terms}
\label{Fig-ab}
\end{figure}

\noindent The last remaining non-zero differentials are 
$d_4:E_4^{2i+1,4}\rightarrow E_4^{2i+6,0}$, $d_{6}(U^{i}\Lambda)=U^{i+3}$, 
$i>0$. Then $E_{5}=E_{\infty }$, \textit{cf.} Figure~\ref{Fig-ac}.

\begin{figure}[tbh]
\centering\includegraphics[scale=0.85]{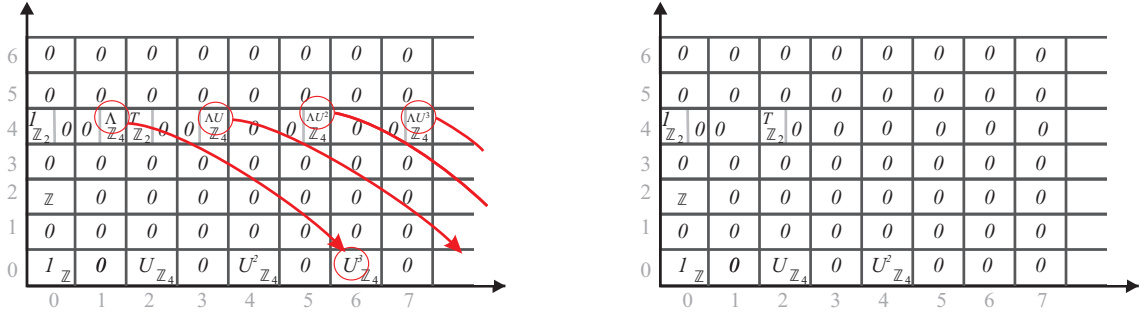}
\caption{Differentials in $E_4$ and $E_{5}$-terms}
\label{Fig-ac}
\end{figure}

\subsection{The index of $\Omega $}

The conclusion $d_{6}(\Lambda )=U^3$ implies that
\begin{equation*}
\mathrm{Index}_{\mathbb{Z}_4,\mathbb{Z}}\Omega =\langle U^3\rangle .
\end{equation*}
Since the generator $2U^2$ of the 
$\mathrm{Index}_{\mathbb{Z}_4,\mathbb{Z}}S(U_4\times U_2)$ is not contained in 
the $\mathrm{Index}_{\mathbb{Z}_4,\mathbb{Z}}\Omega $ it follows that 
\textbf{there is no equivariant map} $\Omega\rightarrow S(U_4\times U_2)$. This 
concludes the proof of Theorem \ref{Th:Main-2}.

\section{\label{Sec:Final}Concluding remarks}

\subsection{\label{Sec:Final-1} The $\mathbb{F}_2$-index}

Let $H^*(\mathbb{Z}_4,\mathbb{F}_2)=\mathbb{F}_2[e,u]/e^2$, $\deg (e)=1$, 
$\deg (u)=2$. The homomorphism of coefficients 
$j:\mathbb{Z}\rightarrow\mathbb{F}_2$, $j(1)=1$, 
induces a homomorphism in group cohomology 
$j^*:H^*(\mathbb{Z}_4;\mathbb{Z})\rightarrow H^*(\mathbb{Z}_4,\mathbb{F}_2)$ 
given by
$j^8(U)=u$ (compare \cite[Section 4.5.2]{B-Z}).

The $\mathbb{F}_2$-index of the configuration space $\Omega $ is
\begin{equation*}
\mathrm{Index}_{\mathbb{Z}_4,\mathbb{F}_2}\Omega =\langle eu^2,u^3\rangle .
\end{equation*}
This can be obtained in a similar fashion as we obtained the index with 
$\mathbb{Z}$-coefficients in Section \ref{Sec:SS}. The relevant $E_2$-term of
the Serre spectral sequence of the fibration $\Omega \rightarrow \Omega
\times_{\mathbb{Z}_4}\mathrm{E}\mathbb{Z}_4\rightarrow \mathrm{B}\mathbb{Z}_4 $ 
is described in Figure~\ref{Figure-4}.

\begin{figure}[tbh]
\centering\includegraphics[scale=0.60]{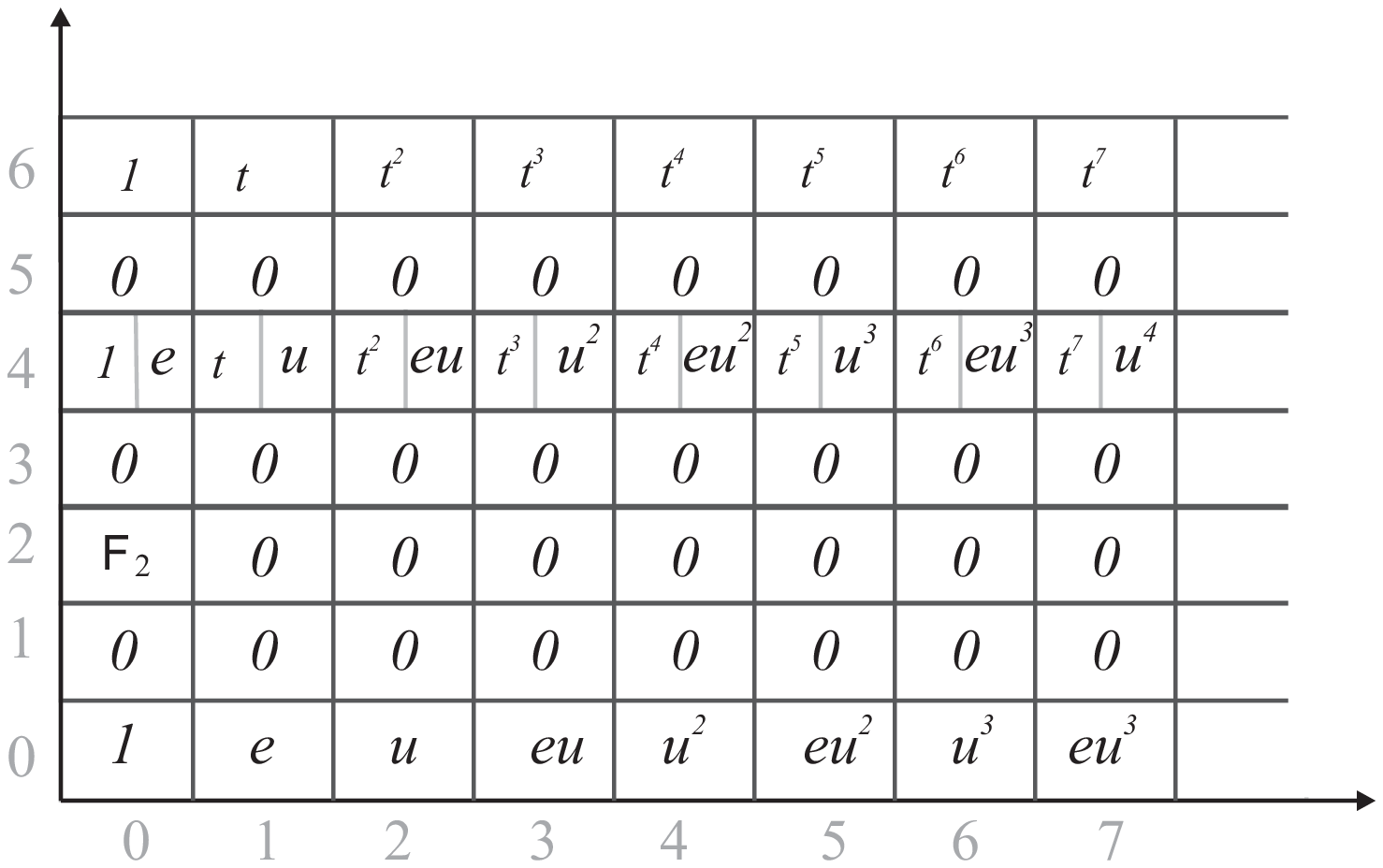}
\caption{$E_2$-term with $\mathbb{F}_2$-coefficients}
\label{Figure-4}
\end{figure}

The $\mathbb{F}_2$-index of the sphere $S(U_4\times U_2)$ is generated by
the $j^*$ image of the generator $2U^2$ of the index with 
$\mathbb{Z}$-coefficients 
$\mathrm{Index}_{\mathbb{Z}_4,\mathbb{Z}}S(U_4\times U_2)$.
Since $j^*(2U^2)=0$ the index 
$\mathrm{Index}_{\mathbb{Z}_4,\mathbb{F}_2}S(U_4\times U_2)$ is trivial. 
Therefore, for our problem \textbf{no
conclusion can be obtained from the study of the $\mathbb{F}_2$-index.} The
same observation holds even when the complete group $D_8$ is used. The 
$\mathbb{F}_2$-index of the sphere $S(U_4\times U_2)$ would be generated by 
$xyw=0\in H^*(D_8;\mathbb{F}_2)$, in the notation of \cite{B-Z}.

\subsection{\label{Sec:Final-2}The square peg problem}

The method of configuration spaces can also be set up for to the continuous
square peg problem. Following the ideas presented in Section \ref{Sec:CSTM},
taking for $X$ the product $S^1\times S^1\times S^1\times S^1$, for $Y$ the
subspace $Y=\left\{ (x,y,x,y)~|~x,y\in S^1\right\} $ and for the
configuration space $\Omega =X\backslash Y$, the square peg problem can be
related to the question of the existence of a $D_8$-equivariant map $\Omega
\rightarrow S(U_4\times U_2)$. The Fadell--Husseini indexes can be retrieved:%
\begin{equation*}
\mathrm{Index}_{\mathbb{Z}_4,\mathbb{Z}}\Omega =\langle U^2\rangle \text{
\qquad and \qquad } \mathrm{Index}_{\mathbb{Z}_4,\mathbb{Z}}S(U_4\times
U_2)=\langle 2U^2\rangle ,
\end{equation*}
but since 
$\mathrm{Index}_{\mathbb{Z}_4,\mathbb{Z}}\Omega \supseteq   
 \mathrm{Index}_{\mathbb{Z}_4,\mathbb{Z}}S(U_4\times U_2)$ 
the result does not yield
any conclusion. The same can be done for the complete symmetry group $D_8$,
explicitly 
$\mathrm{Index}_{D_8,\mathbb{Z}}S(U_4\times U_2)=\langle 2\mathcal{W}\rangle $
and 
$\mathcal{W}\in \mathrm{Index}_{D_8,\mathbb{Z}}\Omega $.

\medskip

\subsubsection*{Acknowledgements.}

Thanks to Anton Dochterman for many useful comments.

\end{document}